\documentclass[12pt]{amsart}
\usepackage{graphicx, amssymb}
\setlength{\textwidth}{5.4in}\hoffset=-0.2in
\title{Integrating curvature: from Umlaufsatz to $J^+$ invariant.}
\author{Sergei Lanzat}
\address{Department of Mathematics, Technion- Israel
Institute of Technology, Haifa 32000, Israel}
\email{serjl@tx.technion.ac.il, polyak@math.technion.ac.il}
\author{Michael Polyak}


\newcommand{\ZZ}{\mathbb{Z}}
\newcommand{\RR}{\mathbb{R}}
\newcommand{\SSS}{\mathbb{S}}
\newcommand{\sss}{\scriptstyle\mathbb{S}}
\newcommand{\minus}{\smallsetminus}
\newcommand{\G}{\Gamma}
\newcommand{\tG}{\widetilde{\Gamma}}
\newcommand{\be}{\begin{itemize}}
\newcommand{\ee}{\end{itemize}}
\def\ind{\operatorname{ind}_\Gamma}
\def\tind{\operatorname{ind}_{\widetilde{\Gamma}}}
\def\rot{\operatorname{rot}}


\newtheorem{thm}{Theorem}
\newtheorem{cor}[thm]{Corollary}
\newtheorem{prop}[thm]{Proposition}
\newtheorem{rem}[thm]{Remark}


\begin{document}

 \begin{abstract}
Hopf's Umlaufsatz relates the total curvature of a closed immersed 
plane curve to its rotation number. While the curvature of a curve 
changes under local deformations, its integral over a closed curve 
is invariant under regular homotopies. 
A natural question is whether one can find some non-trivial densities on 
a curve, such that the corresponding integrals are (possibly after some 
corrections) also invariant under regular homotopies of the curve in 
the class of generic immersions. We construct a family of such densities 
using indices of points relative to the curve.  
This family depends on a formal parameter $q$ and may be considered 
as a quantization of the total curvature. The linear term in the Taylor 
expansion at $q=1$ coincides, up to a normalization, with Arnold's 
$J^+$ invariant. This leads to an integral expression for $J^+$.
 \end{abstract}

\keywords{plane curves, curvature, rotation number, regular homotopy}
\subjclass[2010]{53A04, 57R42}
\thanks{Both authors were partially supported by the ISF grant 1343/10.}

\maketitle

Let $\G$ be a closed oriented immersed plane curve
$\G:\SSS^1\to\RR^2$. One of the fundamental notions related to $\G$
is its {\em curvature} $\kappa$. Another important notion is that of
a {\em rotation number} (or Whitney winding number) $\rot(\G)$, i.e. 
the number of turns made by the tangent vector as we follow $\G$ 
along its orientation.

Hopf's Umlaufsatz \cite{H} is one of the simplest versions of the
Gauss-Bonnet theorem and one of the fundamental theorems in the
theory of plane curves. It relates two different types of data:
local geometric characteristic of a plane curve -- its curvature
$\kappa$ -- and a global topological characteristic -- its rotation
number $\rot(\G)$. Although the curvature of a plane curve changes
under local deformations, the theorem states that its average
(integral) over a closed curve is invariant under homotopies in the
class of immersed curves:
\begin{thm}[Hopf's Umlaufsatz]\label{thm:umlauf}
\begin{equation}\label{eq:umlauf}
\frac{1}{2\pi}\int_{\sss^1} \kappa(t)\, dt=\rot(\G)
\end{equation}
\end{thm}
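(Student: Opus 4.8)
The plan is to reduce \eqref{eq:umlauf} to the elementary fact that a continuous loop in $\SSS^1$ has a well-defined winding number, computed by lifting it to $\RR$. First I would reparametrize $\G$ by arc length, so that the domain $\SSS^1$ is identified with $[0,L]/(0\sim L)$, where $L$ is the length of $\G$, and $|\G'(t)|\equiv 1$; with this normalization $dt$ in \eqref{eq:umlauf} is the arc-length element and $\kappa(t)$ is the usual signed curvature. The unit tangent vector $T(t)=\G'(t)$ then defines a smooth map $T\colon[0,L]\to\SSS^1$ with $T(L)=T(0)$, i.e.\ a smooth loop in $\SSS^1$, and by definition $\rot(\G)$ is the degree (winding number) of this loop.

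The main step will be to pass from $T$ to a continuous angle function. Using the covering $p\colon\RR\to\SSS^1$, $p(\varphi)=(\cos\varphi,\sin\varphi)$, and the path-lifting property (applicable since $T$ is uniformly continuous on the compact interval $[0,L]$), I would choose a continuous — in fact smooth — lift $\theta\colon[0,L]\to\RR$ with $T(t)=(\cos\theta(t),\sin\theta(t))$; the lift is unique up to an additive constant in $2\pi\ZZ$, which will not affect the computation below. Differentiating gives $T'(t)=\theta'(t)\,(-\sin\theta(t),\cos\theta(t))=\theta'(t)\,N(t)$, where $N(t)$ is the unit normal obtained by rotating $T(t)$ by $+\pi/2$. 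Comparing with the plane Frenet equation $T'(t)=\kappa(t)\,N(t)$ yields $\theta'(t)=\kappa(t)$ for every $t$.

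It then remains to integrate. By the fundamental theorem of calculus,
\[
\int_{\sss^1}\kappa(t)\,dt=\int_0^L\theta'(t)\,dt=\theta(L)-\theta(0).
\]
Since $T(L)=T(0)$, the numbers $\theta(0)$ and $\theta(L)$ differ by an integer multiple of $2\pi$, and this integer is exactly the number of full turns made by $T$ along $[0,L]$, i.e.\ the degree of the tangent loop, which is $\rot(\G)$ by definition. Dividing by $2\pi$ gives \eqref{eq:umlauf}.

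The only genuine subtlety, I expect, is the construction of the continuous lift $\theta$ and the identification of $\tfrac1{2\pi}\bigl(\theta(L)-\theta(0)\bigr)$ with the topological degree of $T$; both are standard facts about the covering $\RR\to\SSS^1$, so no real obstacle arises. Everything else is a direct computation. As a remark, the regular-homotopy invariance emphasized in the introduction follows at once: along a homotopy of immersions the tangent maps form a continuous family of loops in $\SSS^1$, so their common degree, hence the left-hand side of \eqref{eq:umlauf}, is constant.
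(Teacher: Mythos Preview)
The paper does not supply its own proof of Theorem~\ref{thm:umlauf}; Hopf's Umlaufsatz is quoted as a classical result with a reference to \cite{H} and then used as input in the proof of Theorem~\ref{thm:q-umlauf}. So there is nothing in the paper to compare your argument against.

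That said, your argument is correct and is the standard derivation of \eqref{eq:umlauf} in the generality in which the paper uses it: once $\rot(\G)$ is \emph{defined} as the degree of the tangent map $T\colon\SSS^1\to\SSS^1$, lifting $T$ to an angle function $\theta$ and observing $\theta'=\kappa$ from the Frenet equation gives the identity immediately. The only historical remark worth making is that the nontrivial content of Hopf's original Umlaufsatz is the assertion that for a \emph{simple} closed curve this degree is $\pm1$; that fact requires a genuine geometric argument (Hopf's deformation of a secant map, or an equivalent device) and is not covered by your proof---but it is also not what the paper is invoking.
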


A natural question is whether one can find some natural densities
$\rho$ on $\G$ such that the average
$\displaystyle\int_{\sss^1}\kappa(t)\rho(t)\,dt$ is
(possibly after some corrections) also invariant under local
deformations of $\G$. Since the rotation number is (up to
normalization) the only invariant of $\G$ in the class of immersed
curves, we cannot expect such an expression to remain invariant 
under arbitrary homotopies. We can hope, however, that the result 
is invariant under regular homotopies in the class of {\em generic} 
immersions, i.e. immersions with a finite set $X$ of transversal 
double points as the only singularities. Invariants of such a type 
were originally introduced by Arnold \cite{A} and include the 
celebrated $J^\pm$ and $St$ invariants (see \cite{A} for details).

We construct a family of such densities using the {\em index
$\ind(p)$ of a point $p$ relative to $\G$}. Given
$p\in\RR^2\minus\G$, we define $\ind(p)$ as the number of turns
made by the vector pointing from $p$ to $\G(t)$, as we follow $\G$
along its orientation. This defines a locally-constant function on
$\RR^2\minus\G$. See Figure \ref{fig:ind}a. Suppose that $\G$ is
generic. Then we can extend $\ind$ to a $\frac12\ZZ$-valued
function on $\RR^2$. To define $\ind(p)$ for $p\in\G$, average
its values on the regions adjacent to $p$ -- two regions if $p$ is a
regular point of $\G$, and four regions if $p$ is a double point of $\G$.
See Figure \ref{fig:ind}b.
\begin{figure}[htb]
\includegraphics[width=5.0in]{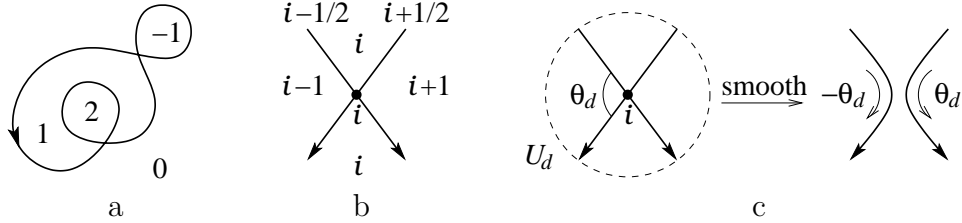}
\\
a\hspace{1.2in}b\hspace{2.0in}c\hspace{0.5in}
\caption{Indices of points and a smoothing of a double point.
\label{fig:ind}}
\end{figure}
For each double point $d=\G(t_1)=\G(t_2)\in X$,
define $\theta_d\in(0,\pi)$ as the (non-oriented) angle between two
tangent vectors $\G'(t_1)$ and $-\G'(t_2)$. For
$q\in\RR\minus\{0\}$, define
$I_q(\G)\in\RR[q^{\frac12},q^{-\frac12}]$ by
\begin{equation}\label{eq:q-umlauf}
I_q(\G)=\frac{1}{2\pi}\left(\int_{\sss^1} \kappa(t)\cdot
q^{\ind(\G(t))}\, dt -\sum_{d\in X}\theta_d \cdot
q^{\ind(d)}(q^{\frac12}-q^{-\frac12})\right)
\end{equation}

\begin{thm}\label{thm:q-umlauf}
$I_q(\G)$ is invariant under regular homotopies of $\G$ in the class 
of generic immersions.
\end{thm}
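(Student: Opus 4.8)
The plan is to show that $I_q(\G_s)$ is constant along an arbitrary smooth path $s\mapsto\G_s$, $s\in[0,1]$, inside the class of generic immersions. Such a path preserves the combinatorial type of the curve (the double points move continuously and none is born, dies, or merges), so the index of every double point and the value of $\ind$ on every complementary region are constant in $s$, and it suffices to prove $\frac{d}{ds}I_q(\G_s)=0$. First I would set up notation: let $\tau_1<\dots<\tau_{2n}$ be the preimages in $\SSS^1$ of the $n$ double points, cutting $\SSS^1$ into arcs $\gamma_1,\dots,\gamma_{2n}$; along the interior of each $\gamma_j$ the curve is locally a single embedded arc with two fixed adjacent regions, so $\ind(\G_s(t))$ equals a constant $c_j\in\tfrac12+\ZZ$ there. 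Picking a lift $\widetilde\psi$ of the tangent angle to $\RR$ and setting $\psi_j=\widetilde\psi(\tau_j)$, the total turning of $\G_s$ along $\gamma_j$ is $\int_{\gamma_j}\kappa\,dt=\psi_{j+1}-\psi_j$ (indices mod $2n$, with the $2\pi\rot(\G)$ monodromy absorbed into $\psi_{2n+1}=\psi_1+2\pi\rot(\G)$), so that
\[
\int_{\sss^1}\kappa(t)\,q^{\ind(\G_s(t))}\,dt=\sum_{j=1}^{2n}q^{c_j}\bigl(\psi_{j+1}-\psi_j\bigr);
\]
the right-hand side, and the numbers $\theta_d$, are now smooth functions of $s$.

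Next I would differentiate and re-sum by parts cyclically, obtaining $\frac{d}{ds}\int_{\sss^1}\kappa\,q^{\ind}\,dt=\sum_{j}\bigl(q^{c_{j-1}}-q^{c_j}\bigr)\dot\psi_j$, and then carry out the key local analysis at each double point $d\in X$. Writing out the four winding numbers around $d$, one finds that two opposite regions carry the index $\ind(d)$ and the other two carry $\ind(d)\pm1$; hence crossing $d$ along either of its two strands the function $\ind$ jumps by exactly $\pm1$, the jumps at the two preimages $\tau_j,\tau_k$ of $d$ are opposite, and in both cases $\{c_{j-1},c_j\}=\{c_{k-1},c_k\}=\{\ind(d)-\tfrac12,\ \ind(d)+\tfrac12\}$. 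Grouping the sum by double points, the term of $d$ collapses to $q^{\ind(d)}(q^{1/2}-q^{-1/2})(\dot\psi_{k}-\dot\psi_{j})$, where $\tau_j$ is taken to be the preimage at which $\ind$ jumps up.

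Then I would identify $\dot\psi_k-\dot\psi_j$ with $\dot\theta_d$. Inspecting the local picture once more, the strand through the ``jump-up'' preimage $\tau_j$ is exactly the one for which $\det(\G_s'(\tau_j),\G_s'(\tau_k))<0$, equivalently for which $-\G_s'(\tau_k)$ is obtained from $\G_s'(\tau_j)$ by a counterclockwise rotation through an angle in $(0,\pi)$; since that angle is by definition $\theta_d$, the identity $\psi_k+\pi-\psi_j=\theta_d$ holds throughout the deformation, so $\dot\psi_k-\dot\psi_j=\dot\theta_d$. Summing over $d$ then gives $\frac{d}{ds}\int_{\sss^1}\kappa\,q^{\ind}\,dt=\sum_{d\in X}q^{\ind(d)}(q^{1/2}-q^{-1/2})\dot\theta_d=\frac{d}{ds}\sum_{d\in X}\theta_d\,q^{\ind(d)}(q^{1/2}-q^{-1/2})$, hence $\frac{d}{ds}I_q(\G_s)=0$ (the degenerate case $n=0$ is trivial, $I_q=\rot(\G)\,q^{\rot(\G)/2}$).

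The step I expect to be the main obstacle is precisely this sign bookkeeping at a double point underlying the last two claims: one must keep simultaneous track of the four region indices, the direction in which $\ind$ jumps along each strand, the orientation sign of the crossing, and the sign with which $\theta_d$ enters the relation between the two tangent angles, and verify that all of these conventions conspire to make the cancellation exact — indeed, this is what pins down the precise shape $-\theta_d\,q^{\ind(d)}(q^{1/2}-q^{-1/2})$ of the correction term in \eqref{eq:q-umlauf}. The remaining ingredients — the reduction to a smooth path of fixed combinatorial type, the arcwise application of the Umlaufsatz, and the cyclic summation by parts — are routine.
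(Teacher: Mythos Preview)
Your argument is correct and takes a genuinely different route from the paper. The paper does not differentiate along a homotopy; instead it smooths $\G$ at every double point (respecting orientation) into a multi-component embedded curve $\tG=\cup_n\tG_n$, shows by a local curvature calculation near each double point that
\[
I_q(\G)=I_q(\tG)-\tfrac12\sum_d q^{\ind(d)}\bigl(q^{\frac12}-q^{-\frac12}\bigr),
\]
and then observes that on each embedded component the index $\tind$ is constant, so the ordinary Umlaufsatz gives $I_q(\tG_n)=\pm\,q^{\tind(\tG_n(t))}$, manifestly a regular-homotopy invariant. Your approach is more analytic and self-contained --- no auxiliary multi-curve, no extension of the definitions to several components --- and it makes transparent that the correction term $-\theta_d\,q^{\ind(d)}(q^{\frac12}-q^{-\frac12})$ is exactly what is needed to cancel the $s$-derivative of the curvature integral. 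The paper's smoothing argument, in exchange, yields as a byproduct a closed-form evaluation of $I_q(\G)$ in terms of the rotation numbers and indices of the smoothed components, which feeds directly into the later computations on the standard curves $K_i$.
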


\begin{proof}
Note that we can generalize all above notions and formulas to the
case of a multi-component curve $\G:\sqcup_n\SSS^1\to\RR^2$ (by a
summation of indices relative to all components of $\G$).

Let us smooth the original curve $\G$ in each double point
respecting the orientation to get a multi-component curve
$\tG=\cup_n\tG_n$ without double points. Denote by $\tind(p)$ 
the index of a point $p$ relative to $\tG$. Note that values of 
$I_q$ on $\G$ and $\tG$ differ by an easily computable factor 
(which depends only on the regular homotopy class of $\G$ in 
the class of generic immersions). 
Indeed, consider a small neighborhood $U_d$ of a double point 
$d$ of index $i$, see Figure \ref{fig:ind}c. Under smoothing of 
$d$, the total curvature of $\tG\cap U_d$ differs from that of 
$\G\cap U_d$ by $\pm(\pi-\theta_d)$ for the fragment with index 
$i\pm\frac12$, see Figure \ref{fig:ind}c.
Thus the integral part of $I_q$ changes by
$\displaystyle\frac{1}{2\pi}(\pi-\theta_d)(q^{i+\frac12}-q^{i-\frac12})$.
Also, the double point $d$ contributes
$\displaystyle-\frac{1}{2\pi}\theta_d q^i(q^{\frac12}-q^{-\frac12})$
to $I_q(\G)$. Smoothing removes $d$, so this summand disappears from
$I_q(\tG)$. Thus, the total change of $I_q$ under smoothing of $d$
equals $\displaystyle\frac12 q^i(q^{\frac12}-q^{-\frac12})$. Hence
\begin{equation*}
I_q(\G)=I_q(\tG)-\frac12\sum_d q^{\ind(d)}(q^{\frac12}-q^{-\frac12})\,.
\end{equation*}
Since $\sum_d q^{\ind(d)}(q^{\frac12}-q^{-\frac12})$ is invariant
under regular homotopies of $\G$ in the class of generic immersions, 
it remains to prove the invariance of $I_q(\tG)=\sum_n I_q(\tG_n)$.

Note that $\tind(\tG(t))$ is constant on each component $\tG_n$
of $\tG$, so
 $$I_q(\tG_n)=\frac{1}{2\pi}\int_{\sss^1}\kappa_n(t)\cdot
 q^{\tind(\tG_n(t))}\,dt=q^{\tind(\tG_n(t))}
 \frac{1}{2\pi}\int_{\sss^1} \kappa_n(t)\,dt$$
and by Umlaufsatz \eqref{eq:umlauf} we get $I_q(\tG_n)=\pm\,
q^{\tind(\tG_n(t))}$, depending on \linebreak $\rot(\tG_n)=\pm1$. 
Thus, $I_q(\tG_n)$ is invariant under regular homotopies of $\tG$. 
But a regular homotopy of $\G$ in the class of generic immersions 
induces a regular homotopy of $\tG$ and the theorem follows.

\end{proof}

Any two immersions with the same rotation number can be connected by
regular homotopy in the class of generic immersions and a finite sequence 
of self-tangency and triple-point modifications, shown in Figure \ref{fig:modif}. 
\begin{figure}[htb]
\includegraphics[width=3.5in]{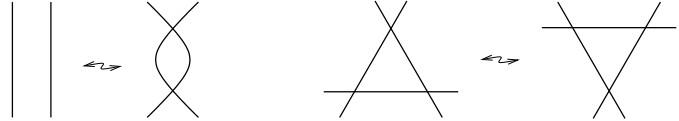}
\caption{Self-tangency and triple-point modifications.
\label{fig:modif}}
\end{figure}
Depending on orientations and indices of adjacent regions, one can 
distinguish several types of these modifications. Self-tangencies can 
be separated into direct (or dangerous) and opposite (or safe), shown 
in Figure \ref{fig:modif2}a and \ref{fig:modif2}b respectively. 
An index of a self-tangency modification is the index of two new-born 
double points (e.g., modifications in Figure \ref{fig:modif2} are of index $i$). 
Triple-point modifications can be separated into weak (or acyclic) and 
strong (or cyclic), shown in Figure \ref{fig:modif3}a and \ref{fig:modif3}b 
respectively. An index of a triple-point modification\footnote{Our indices 
of modifications differ from the ones of \cite{V} by an $-1$ shift.} is the 
minimum of indices of double points involved in this modification (e.g., 
modifications in Figure \ref{fig:modif3} are of index $i$).
\begin{figure}[htb]
\includegraphics[width=5.0in]{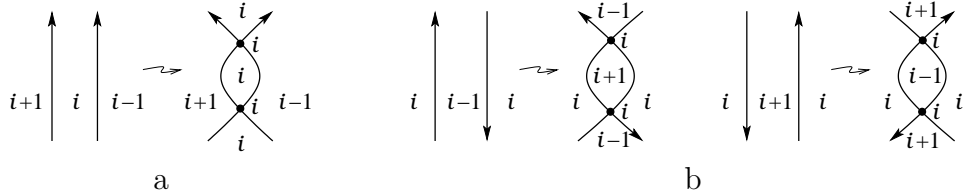}
\\
a\hspace{2.7in}b\hspace{0.6in}
\caption{Direct and opposite self-tangency modifications of index $i$.
\label{fig:modif2}}
\end{figure}
\begin{figure}[htb]
\includegraphics[width=5.0in]{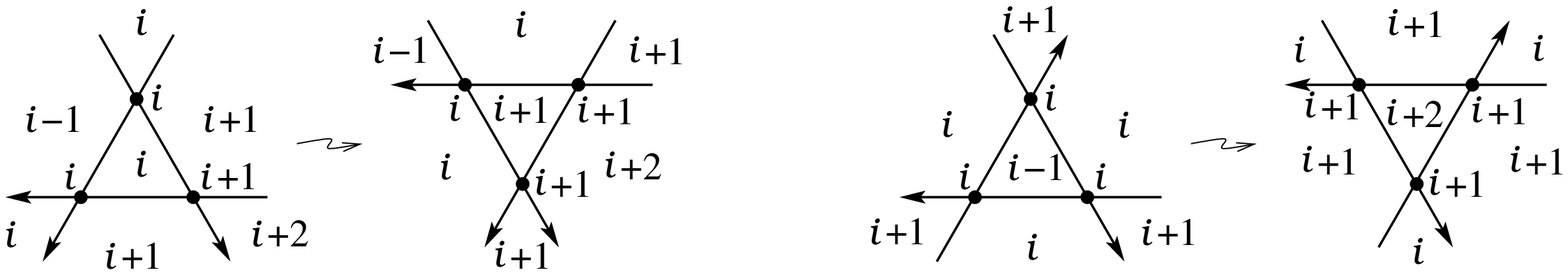}
\\
a\hspace{2.7in}b
\caption{Weak and strong triple-point modifications of index $i$.
\label{fig:modif3}}
\end{figure}
Invariants of regular homotopy classes of generic immersions are 
unique\-ly determined by their behavior under these modifications, 
together with normalizations on standard curves $K_i$ of 
$\rot(K_i)=i$, $i=0,\pm1,\pm2,\dots$ shown in Figure \ref{fig:standard}.
\begin{figure}[htb]
    \includegraphics[width=5.0in]{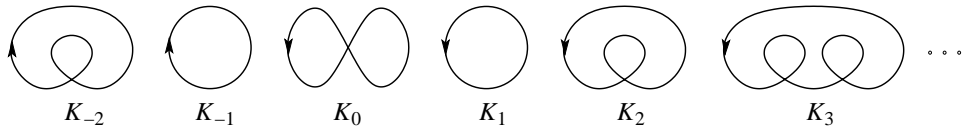}
\caption{Standard curves of indices $0,\pm1,\pm2,\dots$.
\label{fig:standard}}
\end{figure}
Basic invariants $J^\pm$ and $St$ of (regular homotopy classes of)
generic plane curves were introduced axiomatically by Arnold \cite{A}.
In particular, $J^+$ is uniquely determined by the following axioms:
\be
\item $J^+$ does not change under an opposite self-tangency
or triple-point modifications.
\item Under a direct self-tangency modification which increases
the number of double points, $J^+$ jumps by 2.
\item On the standard curves $K_i$ we have $J^+(K_0)=0$ and
$J^+(K_i)=-2(|i|-1)$ for $i=\pm1,\pm2,\dots$.
\ee
In a similar way, $I_q(\G)$ is uniquely determined by the following
\begin{thm}\label{thm:q-properties}
The invariant $I_q(\G)$ satisfies the following properties:
\be
 \item $I_q(\G)$ does not change under opposite self-tangencies.
 \item Under direct self-tangencies of index $i$, the invariant
$I_q(\G)$ jumps by $-q^i(q^{\frac12}-q^{-\frac12})$.
 \item Under (both weak and strong) triple-point modifications of index $i$, 
$I_q(\G)$ jumps by $-\frac12 q^{i+\frac12}(q^{\frac12}-q^{-\frac12})^2$.
 \item We have $I_q(-\G)=-I_{q^{-1}}(\G)$, where $-\G$ denotes $\G$
 with the opposite orientation.
 \item On the standard curves $K_i$ we have $\displaystyle I_q(K_0)=
 \frac12(q^{\frac12}-q^{-\frac12})$ and 
$\displaystyle I_q(K_i)=
\frac12(i-1)\,q^{\frac32}+\frac12(i+1)\,q^{\frac12}$ for $i=1, 2,\dots$
\ee
\end{thm}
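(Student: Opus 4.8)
The plan is to verify each of the five properties in turn, exploiting the fact that Theorem~\ref{thm:q-umlauf} already guarantees $I_q(\G)$ is a regular-homotopy invariant, so it suffices to compute the jump across each elementary modification locally and to evaluate $I_q$ on the standard curves directly. For properties (1)--(3), I would work inside a small disc $U$ in which the modification takes place and track two contributions to \eqref{eq:q-umlauf}: the change in the integral $\frac{1}{2\pi}\int \kappa\cdot q^{\ind}$ and the change in the double-point sum $\frac{1}{2\pi}\sum_d \theta_d\, q^{\ind(d)}(q^{\frac12}-q^{-\frac12})$. Everything outside $U$ is unaffected, and the indices of regions outside $U$ are unchanged, so the computation reduces to bookkeeping with the local picture. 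A clean way to organize this is to reuse the smoothing trick from the proof of Theorem~\ref{thm:q-umlauf}: smoothing all double points converts the jump calculation for $\G$ into a jump calculation for the disjoint union $\tG$ of embedded circles, where $\ind$ is locally constant and the integral of curvature over each arc is controlled by Umlaufsatz \eqref{eq:umlauf} applied to the fragments.

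For property (1), an opposite self-tangency creates two double points $d_1, d_2$ of the same index $i$ but with $\theta_{d_1}$ and $\theta_{d_2}$ having opposite-sign contributions to the curvature integral (the two new arcs bulge in opposite ways), so the two terms $\pm\frac{1}{2\pi}(\pi-\theta_{d_j})(q^{i+\frac12}-q^{i-\frac12})$ cancel against each other, and likewise the two double-point summands $-\frac{1}{2\pi}\theta_{d_j}q^i(q^{\frac12}-q^{-\frac12})$ cancel; hence the total jump is $0$. For property (2), a direct self-tangency of index $i$ also creates two double points but now both new arcs bulge the same way, so instead of cancelling, the curvature contributions and the $\theta_d$-terms add with opposite overall sign and the $\theta_d$-dependence drops out, leaving the clean jump $-q^i(q^{\frac12}-q^{-\frac12})$; the key point is that the angle $\theta$ at the moment of tangency is shared by both new double points and appears with opposite signs in the integral-part change versus the double-point-sum change. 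For property (3), a triple-point move changes which of three double points is ``inside'' versus ``outside'' a small triangle; I would enumerate the indices of the three double points (one of index $i$, and depending on weak/strong the other two are $i, i{+}1$ or $i, i, i{+}1$ in some arrangement), write the net change of both parts of $I_q$, and check that the $\theta_d$'s again cancel in pairs (each of the three angles $\theta$ persists through the move, only its index-weight $q^{\ind(d)}$ changes), yielding $-\frac12 q^{i+\frac12}(q^{\frac12}-q^{-\frac12})^2$ for both types.

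Property (4) is immediate from the definition: reversing orientation sends $\kappa \mapsto -\kappa$, sends $\ind \mapsto -\ind$ at every point (so $q^{\ind}\mapsto q^{-\ind}$, i.e.\ the substitution $q\mapsto q^{-1}$), and sends $(q^{\frac12}-q^{-\frac12})\mapsto -(q^{\frac12}-q^{-\frac12})$; collecting signs gives $I_q(-\G) = -I_{q^{-1}}(\G)$. Property (5) is a direct computation on the standard curves $K_i$ of Figure~\ref{fig:standard}: $K_0$ is a figure-eight-type curve with one double point of index $0$ and two loops of rotation numbers $+1$ and $-1$; for $i\ge 1$, $K_i$ has $i{-}1$ double points whose indices and the rotation numbers of the complementary arcs can be read off the picture, and plugging these into \eqref{eq:q-umlauf} (using Umlaufsatz on each arc and the known angles $\theta_d$, which for the standard curves are taken in the limit $\to 0$ or handled via the smoothing formula) yields the stated polynomials. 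Finally, I would note that properties (1)--(3) together with the normalization (5) uniquely determine any invariant of generic immersions, matching the uniqueness statement preceding the theorem. The main obstacle is property (3): unlike the self-tangency cases, the triple-point move involves three double points with genuinely different index-weights and a more intricate local geometry, so correctly accounting for the signs of the curvature contributions of the three arcs bounding the vanishing triangle --- and confirming that the outcome is the same for the weak and strong versions --- requires careful attention to the orientations in Figure~\ref{fig:modif3}.
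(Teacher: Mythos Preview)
Your proposal follows the same strategy as the paper's own argument: the paper's proof of this theorem is essentially two sentences, saying that properties (1)--(3) and (5) are a ``straightforward computation'' and giving for (4) exactly the observation you give (that $\operatorname{ind}_{-\G}=-\operatorname{ind}_{\G}$ produces the substitution $q\mapsto q^{-1}$, while both summands of \eqref{eq:q-umlauf} pick up a sign). Organizing the local computations via the smoothing identity from the proof of Theorem~\ref{thm:q-umlauf} is a sensible way to carry out that ``straightforward computation,'' and your treatment of (4) and (5) is fine.

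There is, however, a concrete slip in your mechanism for property (1). You assert that under an opposite self-tangency the two new double-point summands $-\frac{1}{2\pi}\theta_{d_j}\,q^i(q^{1/2}-q^{-1/2})$ \emph{cancel}. They do not: both $\theta_{d_j}$ are positive and both terms enter $I_q$ with the same sign, so they add rather than cancel. The vanishing of the jump has to come from the interplay between the change in the curvature integral and the change in the double-point sum taken together, not from a pairwise cancellation within each piece. If you run the smoothing argument carefully you will see that, for an \emph{opposite} tangency, smoothing the two new crossings produces a small embedded circle whose contribution $\pm q^{i\pm 1/2}$ to $I_q(\tG)$ is exactly what balances the change $-\tfrac12\cdot 2q^i(q^{1/2}-q^{-1/2})$ in the correction term; for a \emph{direct} tangency the local smoothed picture is instead isotopic to the original, so $I_q(\tG)$ is unchanged and the full jump $-q^i(q^{1/2}-q^{-1/2})$ comes from the correction term alone. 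Your ``bulging in opposite/same ways'' heuristic has these two cases reversed. Once this is corrected the rest of your outline goes through, and for (3) the same smoothing bookkeeping (tracking how the indices of the three double points shift by $\pm 1$ across the move) yields the stated jump uniformly for the weak and strong cases.
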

\begin{proof}
A straightforward computation verifies both the behavior of 
$I_q(\G)$ under self-tangencies and triple-point modifications 
and its values on the curves $K_i$. To verify the behavior of 
$I_q(\G)$ under an orientation reversal, note that 
$\operatorname{ind}_{-\G}(p)=-\operatorname{ind}_\G(p)$, 
which corresponds to the involution $q\to q^{-1}$  in terms 
$q^{\ind(\G(t))}$ and $q^{\ind(d)}$ of \eqref{eq:q-umlauf}. 
Also, both terms in \eqref{eq:q-umlauf} change signs: the 
integral due to the change of parametrization, and the sum 
over double points due to the equality 
$q^{\frac12}-q^{-\frac12}=
-\left((q^{-1})^{\frac12}-(q^{-1})^{-\frac12}\right)$.
\end{proof}

Substituting $q=1$ into \eqref{eq:q-umlauf}, we readily obtain
$I_1(\G)=\frac{1}{2\pi}\int_{\sss^1} \kappa(t)\, dt=\rot(\G)$ and
recover the classical Hopf Umlaufsatz, see Theorem \ref{thm:umlauf}.
In this sense, invariant $I_q$ may be considered as a quantization
of the total curvature \eqref{eq:umlauf}. Let us study the next term
$I'_1(\G)$ of the Taylor expansion of $I_q(\G)$ at $q=1$.

\begin{prop}\label{prop:J+}
$I'_1(\G)$ is related to Arnold's $J^+$ invariant by 
$$I'_1(\G)=\frac12(1-J^+(\G))\,.$$
\end{prop}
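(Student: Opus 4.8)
The plan is to use the fact, recalled just before the proposition, that a regular-homotopy invariant of generic immersions is completely determined by its jumps under the three types of modifications (direct self-tangency, weak triple point, strong triple point) together with its values on the standard curves $K_i$. Since both $I'_1(\G)$ and $\frac12(1-J^+(\G))$ are such invariants, it suffices to check that they agree on these generators. First I would differentiate the formulas of Theorem \ref{thm:q-properties} at $q=1$: writing $q^{\frac12}-q^{-\frac12}$ as a function of $q$ whose value at $q=1$ is $0$ and whose derivative at $q=1$ is $1$, the jump $-q^i(q^{\frac12}-q^{-\frac12})$ of $I_q$ under a direct self-tangency has value $0$ at $q=1$ (consistent with $I_1=\rot$ being unchanged) and derivative $-1$ at $q=1$; hence $I'_1$ jumps by $-1$ under a direct self-tangency. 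Similarly, the triple-point jumps $-\frac12 q^{i+\frac12}(q^{\frac12}-q^{-\frac12})^2$ vanish to second order at $q=1$, so their derivative at $q=1$ is $0$: $I'_1$ does not change under either weak or strong triple-point modifications, nor (by property (1)) under opposite self-tangencies.

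Next I would compare with $\frac12(1-J^+)$: by Arnold's axioms, under a direct self-tangency that increases the number of double points $J^+$ jumps by $+2$, so $\frac12(1-J^+)$ jumps by $-1$, matching the jump of $I'_1$; and $J^+$ is unchanged under opposite self-tangencies and all triple-point modifications, so $\frac12(1-J^+)$ is too. It then remains to match the normalizations on the standard curves. Differentiating property (5) of Theorem \ref{thm:q-properties} at $q=1$: from $I_q(K_0)=\frac12(q^{\frac12}-q^{-\frac12})$ we get $I'_1(K_0)=\frac12$, which equals $\frac12(1-J^+(K_0))=\frac12(1-0)$; and from $I_q(K_i)=\frac12(i-1)q^{\frac32}+\frac12(i+1)q^{\frac12}$ for $i\ge1$ we compute $I'_1(K_i)=\frac34(i-1)+\frac14(i+1)=i-\frac12$, which must be checked against $\frac12(1-J^+(K_i))=\frac12(1-(-2(i-1)))=\frac12(2i-1)=i-\frac12$. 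The cases $i\le0$ follow from property (4): $I_q(-\G)=-I_{q^{-1}}(\G)$ gives, upon differentiating at $q=1$ (the chain rule contributing a sign from $\frac{d}{dq}q^{-1}|_{q=1}=-1$ but also $-$ in front, which cancel appropriately), the relation $I'_1(-\G)=I'_1(\G)$; combined with $J^+(-\G)=J^+(\G)$ and $\rot(-\G)=-\rot(\G)$, reversing orientation on $K_i$ yields $K_{-i}$, and the identity holds there as well. One should also note that the jump of $I'_1$ under a direct self-tangency is independent of the index $i$, as it must be for consistency with the $i$-independent jump of $J^+$.

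The main obstacle, modest as it is, is purely bookkeeping: one must be careful that the modifications enumerated in Figures \ref{fig:modif2} and \ref{fig:modif3}, together with the standard curves $K_i$, really do generate all regular homotopy classes of generic immersions (this is exactly the statement quoted in the excerpt preceding Theorem \ref{thm:q-properties}), and that the derivative of each jump formula is taken correctly — in particular keeping track of the fact that $(q^{\frac12}-q^{-\frac12})^2$ and its first derivative both vanish at $q=1$, so only the self-tangency term and the normalizations contribute to $I'_1$. Once these derivatives are computed, the proof reduces to the observation that a regular-homotopy invariant determined by its jumps and normalizations is unique, so the equality of $I'_1(\G)$ and $\frac12(1-J^+(\G))$ on generators forces it everywhere.

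\begin{proof}
By the uniqueness statement for invariants of regular homotopy classes of generic immersions, it suffices to compare the jumps of $I'_1(\G)$ and of $\frac12(1-J^+(\G))$ under the modifications of Figures \ref{fig:modif2} and \ref{fig:modif3}, and their values on the standard curves $K_i$.

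We differentiate the jump formulas of Theorem \ref{thm:q-properties} at $q=1$. Set $f(q)=q^{\frac12}-q^{-\frac12}$, so that $f(1)=0$ and $f'(1)=1$. Under a direct self-tangency of index $i$, $I_q(\G)$ jumps by $-q^i f(q)$, whose derivative at $q=1$ equals $-f(1)\cdot i - f'(1)=-1$; hence $I'_1$ jumps by $-1$, independently of $i$. Under a weak or strong triple-point modification of index $i$, $I_q(\G)$ jumps by $-\frac12 q^{i+\frac12}f(q)^2$; since $f(1)=0$, this expression vanishes to second order at $q=1$, so its derivative at $q=1$ is $0$ and $I'_1$ is unchanged. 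By property (1) of Theorem \ref{thm:q-properties}, $I'_1$ is also unchanged under opposite self-tangencies.

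Now compare with $\frac12(1-J^+)$. By Arnold's axioms, under a direct self-tangency increasing the number of double points $J^+$ jumps by $2$, so $\frac12(1-J^+)$ jumps by $-1$, matching $I'_1$; and $J^+$ is unchanged under opposite self-tangencies and under all triple-point modifications, hence so is $\frac12(1-J^+)$. Finally we match normalizations. From $I_q(K_0)=\frac12 f(q)$ we get $I'_1(K_0)=\frac12 f'(1)=\frac12=\frac12(1-J^+(K_0))$ since $J^+(K_0)=0$. For $i\ge1$, differentiating $I_q(K_i)=\frac12(i-1)q^{\frac32}+\frac12(i+1)q^{\frac12}$ gives $I'_1(K_i)=\frac34(i-1)+\frac14(i+1)=i-\frac12$, while $\frac12(1-J^+(K_i))=\frac12\bigl(1+2(i-1)\bigr)=i-\frac12$; these agree. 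For $i\le0$ we use property (4): differentiating $I_q(-\G)=-I_{q^{-1}}(\G)$ at $q=1$ and applying the chain rule to $q\mapsto q^{-1}$ (whose derivative at $q=1$ is $-1$) yields $I'_1(-\G)=I'_1(\G)$; combined with $J^+(-\G)=J^+(\G)$ and with the fact that reversing the orientation of $K_i$ produces $K_{-i}$, the identity extends to all $i$. Since $I'_1(\G)$ and $\frac12(1-J^+(\G))$ have the same jumps under all modifications and the same values on all standard curves, they coincide for every generic immersion $\G$.
\end{proof}
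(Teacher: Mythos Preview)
Your proof is correct and follows essentially the same approach as the paper: differentiate the jump formulas of Theorem~\ref{thm:q-properties} at $q=1$, observe that only the direct self-tangency contributes (with jump $-1$, independent of $i$), and then match the normalizations on the standard curves $K_i$ against Arnold's axioms for $J^+$. The only cosmetic difference is that the paper states $I'_1(K_i)=|i|-\tfrac12$ directly for all $i\neq 0$, whereas you derive the negative-$i$ cases via the orientation-reversal property~(4); you might also make explicit that $I'_1$ is a regular-homotopy invariant of generic immersions (needed to invoke the uniqueness principle), which follows immediately from Theorem~\ref{thm:q-umlauf}.
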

\begin{proof}
Note that by Theorem \ref{thm:q-umlauf}, $I'_1(\G)$ is invariant 
under regular homotopies of $\G$ in the class of generic immersions. 
Differentiating at $q=1$ expressions for jumps of $I_q(\G)$ in 
Theorem \ref{thm:q-properties} we immediately conclude that 
$I'_1(\G)$ is invariant under opposite tangencies and triple-point 
modifications. Moreover, under direct tangencies, $I'_1(\G)$ jumps 
by $-1$. Thus its behavior under all modifications is the same as 
that of $-\frac12 J^+(\G)$ (up to an additive constant depending 
on $\rot(\G)$). A straightforward computation shows that $I'_1(\G)$ 
takes values $I'_1(K_0)=\frac12$ and $I'_1(K_i)=|i|-\frac12$ for 
$i=\pm1,\pm2,\dots$ on the standard curves $K_i$ and the 
proposition follows.
\end{proof}

Differentiating RHS of \eqref{eq:q-umlauf} at $q=1$ and using
Proposition \ref{prop:J+}, we get

\begin{cor}
The following integral expression for $J^+$ holds:
$$
J^+(\G)=1-\frac{1}{\pi}\left(\int_{\sss^1} \kappa(t)\cdot
\ind(\G(t))\, dt -\sum_{d\in X}\theta_d\right)\ .
$$
\end{cor}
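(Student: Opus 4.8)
The plan is to differentiate the defining formula \eqref{eq:q-umlauf} for $I_q(\G)$ with respect to $q$ and evaluate at $q=1$, then substitute the result into the identity of Proposition \ref{prop:J+}. First I would write $I_q(\G)$ as the difference of an integral term and a sum over double points. Differentiating $q^{\ind(\G(t))}$ in $q$ gives $\ind(\G(t))\,q^{\ind(\G(t))-1}$, which equals $\ind(\G(t))$ at $q=1$, so the derivative of the integral term at $q=1$ is $\frac{1}{2\pi}\int_{\sss^1}\kappa(t)\,\ind(\G(t))\,dt$. For the sum, each summand is $\theta_d\,q^{\ind(d)}(q^{\frac12}-q^{-\frac12})$; since the factor $(q^{\frac12}-q^{-\frac12})$ vanishes at $q=1$, only the term where the derivative hits this factor survives, and $\frac{d}{dq}(q^{\frac12}-q^{-\frac12})\big|_{q=1}=\frac12+\frac12=1$, while $q^{\ind(d)}=1$ there. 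Hence the derivative of the sum at $q=1$ is $\frac{1}{2\pi}\sum_{d\in X}\theta_d$, and altogether
$$
I'_1(\G)=\frac{1}{2\pi}\left(\int_{\sss^1}\kappa(t)\,\ind(\G(t))\,dt-\sum_{d\in X}\theta_d\right).
$$

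Next I would combine this with Proposition \ref{prop:J+}, which asserts $I'_1(\G)=\frac12\bigl(1-J^+(\G)\bigr)$. Solving for $J^+(\G)$ gives $J^+(\G)=1-2I'_1(\G)$, and substituting the integral expression for $I'_1(\G)$ yields
$$
J^+(\G)=1-\frac{1}{\pi}\left(\int_{\sss^1}\kappa(t)\,\ind(\G(t))\,dt-\sum_{d\in X}\theta_d\right),
$$
which is exactly the claimed formula. This completes the argument.

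There is essentially no obstacle here: the corollary is a formal consequence of Proposition \ref{prop:J+} together with a one-line differentiation of \eqref{eq:q-umlauf}. The only point requiring a modicum of care is correctly applying the product rule to the double-point sum and observing that the term in which $\frac{d}{dq}$ acts on $q^{\ind(d)}$ drops out because it is multiplied by $(q^{\frac12}-q^{-\frac12})\big|_{q=1}=0$; everything else is routine.
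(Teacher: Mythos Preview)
Your argument is correct and is precisely the route the paper takes: the corollary is stated immediately after the sentence ``Differentiating RHS of \eqref{eq:q-umlauf} at $q=1$ and using Proposition \ref{prop:J+}, we get,'' and your computation simply spells this out in detail. The handling of the product rule for the double-point sum is exactly right.
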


\begin{rem}
An infinite family of invariants, called ``momenta of index'' $M_r$ together 
with their generating function $P_\G(q)\in\ZZ[q,q^{-1}]$ were introduced 
by Viro in \cite[Section 5]{V}. A careful check of their behavior under 
self-tangencies and triple-point modifications, together with their values 
on the standard curves $K_i$, allow one to relate $P_\G(q)$ to $I_q(\G)$ 
as follows:
$$P_\G(q)=(q^{\frac12}-q^{-\frac12})I_q(\G)+1+
\frac12\sum_{d\in X}q^{\ind(d)}(q^{\frac12}-q^{-\frac12})^2\ .$$
\end{rem}
\begin{rem}
Our choice of the function $q^{\ind}$ in the integral part of 
\eqref{eq:q-umlauf} was motivated by considerations of conciseness 
and convenience.
In fact, one can use an arbitrary function $f:\frac12\ZZ\to\RR$ of $\ind$ 
instead of $q^{\ind}$ (with an appropriate change of the correction 
term) to produce an invariant.  
Namely, repeating the proof of Theorem \ref{thm:q-umlauf}, one 
can show that
\begin{multline*}
 Z(f,\G)=\frac{1}{2\pi}\int_{\sss^1} \kappa(t)\cdot
f\left(\ind(\G(t))\right)\, dt - \\
\frac{1}{2\pi}\sum_{d\in X}\theta_d \cdot
\big(f(\ind(d)+1/2)-f(\ind(d)-1/2)\big)
\end{multline*}
is an invariant of regular homotopy in the class of generic immersions,
which does not change under opposite self-tangencies. Under direct self-tangencies of index $i$, $Z(f,\G)$ jumps 
by $-\big(f(i+1/2)-f(i-1/2)\big)$.
Under triple-point modifications of index $i$, 
it jumps by 
$-\frac12\big(f(i+3/2)-2f(i+1/2)+f(i-1/2)\big)$.
\end{rem}

\end{document}